\theoremstyle{theorem}
\newtheorem{theorem}{Theorem}[section]
\newtheorem{lemma}[theorem]{Lemma}
\newtheorem{proposition}[theorem]{Proposition}
\newtheorem{definition}{Definition}[section]
\numberwithin{equation}{section}
\title{On linear equations arising in Combinatorics (Part II)}
\author{Masood Aryapoor\footnote{E-mail: aryapoor2002@yahoo.com}}
\date{}
\begin{document}

\maketitle

\begin{abstract}

In \cite{Ar}, the author introduces the class of Farkas--related vectors for which a version of Farkas' lemma over integers is derived. 
In this paper, being the second part of \cite{Ar}, 
two similar classes are introduced and studied. The terminology and results of \cite{Ar} are freely used in this article.

\end{abstract}


\begin{section}{Almost Farkas--related vectors}

We begin with the following definition, compare with the definition of Farkas--related vectors in \cite {Ar}.

\begin{definition}

Vectors  $v_1,...,v_m\in \mathbb{Z}^n$ are called \underline{almost} Farkas--related or a.f.r for short, if the following condition holds: 
Let  $a_1\leq b_1,...,a_m\leq b_m$ be arbitrary integers.
If a vector $w\in \sum_{a_i=b_i} a_i v_i+\sum_{a_i\neq b_i} \mathbb{Z} v_i$ can be written as 
$w=\sum_{i=1}^m x_i v_i$ for rational numbers $a_1\leq  x_1\leq b_1,...,a_m\leq x_m\leq b_m$,
then there exist integers  $a_1\leq  y_1\leq b_1,...,a_m\leq y_m\leq b_m$ such that $w=\sum_{i=1}^m y_i v_i$.
 
\end{definition}


\begin{subsection}{Characterizations of almost Farkas--related vectors}

We want to obtain simple and useful characterizations of almost Farkas--related vectors. First we present the following characterization. 

\begin{lemma} \label{almost Farkas--related, first characterization}

Let $v_1,...,v_m\in \mathbb{Z}^n$ be arbitrary vectors. Then the following conditions are equivalent.\\
(1) The vectors $v_1,...,v_m\in \mathbb{Z}^n$ are a.f.r.\\
(2) Let $I\subset \{1,...,m\}$ be an arbitrary nonempty set.  For arbitrary integers $a_i < b_i$ ($i\in I$),
if a vector $w\in \sum_{i\in I} \mathbb{Z} v_i$ can be written as 
$w=\sum_{i\in I} x_i v_i$ for rational numbers $a_i\leq  x_i\leq b_i$ ($i\in I$),
then there exist integers $a_i\leq  y_i\leq b_i$ ($i\in I$) such that $w=\sum_{i\in I} y_i v_i$.\\
(3) Let $I\subset \{1,...,m\}$ be an arbitrary nonempty set. If a vector $w\in    \sum_{i\in I} \mathbb{Z} v_i$ can be written as 
 $w=\sum_{i\in I} x_i v_i$ for  rational numbers $0\leq  x_i\leq 1$ ($i\in I$), then 
 there exist numbers $y_i\in\{0,1\}$ ($i\in I$) such that $w=\sum_{i\in I} y_i v_i$.  \\
(4) Let $I\subset \{1,...,m\}$ be an arbitrary nonempty set and  $k$ be a natural number. If a vector $w\in    \sum_{i\in I} \mathbb{Z} v_i$  can be written as 
 $k w=\sum_{i\in I} a_i v_i$   for integers $0\leq a_i\leq k$ ($i\in I$), then 
 there exist numbers $y_i\in\{0,1\}$ ($i\in I$) such that $w=\sum_{i\in I} y_i v_i$.  \\
 
\end{lemma}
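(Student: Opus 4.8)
The plan is to establish the chain of equivalences $(1)\Rightarrow(2)\Rightarrow(3)\Rightarrow(4)\Rightarrow(3)$ and then close the loop back to $(1)$, since $(3)$ and $(4)$ are clearly the most symmetric and easiest pair to connect. Let me think about each arrow carefully.

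Let me start with what I expect to be routine and what will be the obstacle.

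**The easy implications.** $(1)\Rightarrow(2)$ should follow almost immediately by specialization: given the data of $(2)$ on an index set $I$ with $a_i<b_i$, I extend it to all of $\{1,\dots,m\}$ by setting $a_j=b_j=0$ for $j\notin I$. Then $\sum_{a_j=b_j}a_jv_j=0$ and $\sum_{a_j\neq b_j}\mathbb{Z}v_j=\sum_{i\in I}\mathbb{Z}v_i$, so the hypothesis of the a.f.r.\ definition is exactly the hypothesis of $(2)$, and the integral solution it produces has $y_j=0$ off $I$, giving $(2)$. For $(2)\Rightarrow(3)$ I just take $a_i=0,b_i=1$ for all $i\in I$. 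The reverse $(3)\Rightarrow(2)$ is the genuinely substantive reduction and I expect it to be the crux: I would want to reduce bounds $a_i\le x_i\le b_i$ to $0/1$ bounds by the standard unary-expansion trick, replacing each variable $x_i$ ranging over $[a_i,b_i]$ by $(b_i-a_i)$ fresh copies of $v_i$ each with a $0/1$ coefficient, after translating by the integer point $\sum a_i v_i\in\sum_{i\in I}\mathbb{Z}v_i$. The point is that condition $(3)$ quantifies over \emph{all} nonempty index sets and allows repeated vectors implicitly (one can take a larger ambient list, or re-examine the statement to see that $(3)$ applied to the multiset of copies suffices); I would verify carefully that the hypothesis "$w\in\sum\mathbb{Z}v_i$" survives this translation and that rounding the $0/1$ relaxation back gives integers in the original boxes.

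**The arithmetic core.** $(3)\Leftrightarrow(4)$ is where the real content of the lemma lives, and I expect $(4)\Rightarrow(3)$ to be the main obstacle. The direction $(3)\Rightarrow(4)$ is easy: if $kw=\sum_{i\in I}a_iv_i$ with $0\le a_i\le k$, then dividing by $k$ exhibits $w=\sum_{i\in I}(a_i/k)v_i$ as a rational combination with $0\le a_i/k\le 1$, and $w\in\sum_{i\in I}\mathbb{Z}v_i$ by hypothesis, so $(3)$ yields the desired $0/1$ representation. For $(4)\Rightarrow(3)$, the hypothesis of $(3)$ gives rational coefficients $x_i=p_i/q$ on a common denominator $q$ with $0\le x_i\le 1$; I would like to clear denominators to land in the hypothesis of $(4)$. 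The difficulty is that $(4)$ requires the left-hand side to be exactly $k$ times the \emph{same} vector $w$, so I need $kw=\sum a_iv_i$ with $0\le a_i\le k$ — taking $k=q$ and $a_i=p_i$ does exactly this, giving $qw=\sum_{i\in I}p_iv_i$ with integers $0\le p_i\le q$. Then $(4)$ with this $k=q$ produces the $0/1$ solution. So in fact $(4)\Rightarrow(3)$ is also a clean denominator-clearing step, and the whole lemma reduces to the one substantive reduction $(3)\Rightarrow(2)$ described above.

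**Closing the loop.** Finally I need $(2)\Rightarrow(1)$, or more economically $(3)\Rightarrow(1)$, to complete the cycle. For this I would start from the full a.f.r.\ hypothesis with arbitrary bounds $a_i\le b_i$: let $I=\{i:a_i\neq b_i\}$, translate by the integer vector $\sum_{i\notin I}a_iv_i+\sum_{i\in I}a_iv_i$, and recenter the boxes at the origin so that each active variable ranges in $[0,b_i-a_i]$; then apply the already-established equivalence with $(2)$ on the index set $I$ to extract integers $y_i$ with $a_i\le y_i\le b_i$, setting $y_i=a_i$ for $i\notin I$, which reconstructs a full integral representation of $w$. The one point demanding care throughout is the membership condition: I must check at each translation that the target vector genuinely lies in the relevant integer lattice $\sum_{i\in I}\mathbb{Z}v_i$ generated by the active vectors, since that is the hypothesis every one of $(2)$, $(3)$, $(4)$ relies on, and it is precisely this lattice-membership bookkeeping — rather than any deep structural fact — that I expect to be the most error-prone part of the argument.
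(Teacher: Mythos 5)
Your routine arrows are all fine and match the paper: $(1)\Rightarrow(2)$ by padding with $a_j=b_j=0$ off $I$, $(2)\Rightarrow(3)$ by specializing to $a_i=0,b_i=1$, $(3)\Leftrightarrow(4)$ by dividing by $k$ in one direction and clearing a common denominator $q$ (taking $k=q$) in the other, and $(2)\Rightarrow(1)$ by splitting off the indices with $a_i=b_i$. The genuine gap sits exactly where you locate the crux, in $(3)\Rightarrow(2)$. Condition (3) quantifies over nonempty \emph{subsets} $I\subset\{1,\dots,m\}$ of the given list; it does not ``allow repeated vectors implicitly.'' Your unary-expansion step needs (3) to hold for the multiset in which each $v_i$ occurs $b_i-a_i$ times, and that statement is not a formal consequence of (3) for the original list: in fact, (3)-for-multiset-expansions is \emph{equivalent} to (2) itself (one direction splits a coefficient $x_i\in[0,b_i-a_i]$ into $b_i-a_i$ rationals in $[0,1]$; the other direction sums the $0/1$ coefficients on the copies back into a single integer in $[0,b_i-a_i]$). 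So invoking it is circular. Nor does ``taking a larger ambient list'' help, since the hypothesis (3) concerns the specific vectors $v_1,\dots,v_m$ and transfers to no longer list. The verification you defer to the end is precisely the assertion to be proved.

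The missing idea --- and the paper's actual argument --- is a floor-translation that uses only \emph{one} application of (3) to the original index set, with the strict inequalities $a_i<b_i$ providing the needed slack. Given $w=\sum_{i\in I}x_iv_i$ with $a_i\le x_i\le b_i$, set $J=\{i\in I: x_i\neq b_i\}$ and subtract from $w$ the integer combination $\sum_{i\in I\setminus J}(b_i-1)v_i+\sum_{i\in J}[x_i]v_i$. The difference equals $\sum_{i\in J}(x_i-[x_i])v_i+\sum_{i\in I\setminus J}v_i$, whose coefficients lie in $[0,1]$ and which still lies in $\sum_{i\in I}\mathbb{Z}v_i$, so (3) yields $y_i\in\{0,1\}$ with $w=\sum_{i\in I\setminus J}(b_i-1+y_i)v_i+\sum_{i\in J}([x_i]+y_i)v_i$. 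Since $a_i\le b_i-1$ for $i\notin J$ and $a_i\le[x_i]\le b_i-1$ for $i\in J$, all coefficients are integers in $[a_i,b_i]$. Note that this is also exactly the argument one would need to show that (3) survives duplicating a vector, i.e.\ to legitimize your expansion; there is no way around supplying it, and it is why (2) is stated with $a_i<b_i$ rather than $a_i\le b_i$.
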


\begin{proof}

It is straightforward to check that (1)$\Leftrightarrow$(2) and  (3)$\Leftrightarrow$(4) .  
Clearly we have (2)$\Rightarrow$(3). Therefore it remains to prove  (3)$\Rightarrow$(2).   

Suppose that (3) holds. Let $I\subset \{1,...,m\}$ be a nonempty set and  $a_i < b_i$ ($i\in I$) be integers. Suppose that a vector 
$w\in \sum_{i\in I} \mathbb{Z} v_i$ can be written as 
$w=\sum_{i\in I} x_i v_i$ for rational numbers $a_i\leq  x_i\leq b_i$ ($i\in I$). 
Set 
$J=\{i\in I|x_i\neq b_i\}$. We have
 $$w-\sum_{i\in I, i\notin J}(b_i-1) v_i-\sum_{i\in J}[x_i]v_i=\sum_{i\in J}(x_i-[x_i]) v_i+\sum_{i\in I, i\notin J} v_i\in \sum_{i\in I} \mathbb{Z} v_i.$$
So, by (3), there exist numbers $y_i\in\{0,1\}$ such that 
$$w-\sum_{i\in I, i\notin J}(b_i-1) v_i-\sum_{i\in J}[x_i]v_i=\sum_{i\in I} y_i v_i.$$
This equality can be rewritten as $w=\sum_{i\in I, i\notin J}(b_i-1+y_i) v_i+\sum_{i\in J}([x_i]+y_i)v_i.$
Note that the coefficients are integers. Since $a_i<b_i$, we have $a_i\leq b_i-1+y_i\leq b_i$ for $i\notin J$. Moreover,
for $i\in J$, we have $a_i\leq [x_i] \leq b_i-1$ which implies that $a_i\leq [x_i]+y_i \leq b_i$ for $i\in J$ as well. The proof  of (3)$\Rightarrow$(2) is therefore complete.

\end{proof} 

A useful criterion is given below, consult \cite{Ar} to see the definition of "elementary integral relations".  

\begin{proposition} \label{almost Farkas--related , second characterization}

Vectors $v_1,...,v_m\in \mathbb{Z}^n$  are a.f.r if and only if 
for every elementary integral relation $\sum_{i=1}^m a_i v_i=0$, we have
$|a_1|,...,|a_m|\leq 2$ and moreover there exists at most one $1\leq i\leq m$ such that $|a_i|=2$.  

\end{proposition}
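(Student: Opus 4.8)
The plan is to work throughout with the equivalent condition (2) of Lemma~\ref{almost Farkas--related, first characterization}, and to reduce the statement to a transparent one--dimensional picture attached to each elementary relation. Recall that an elementary integral relation is primitive and has minimal support, so if $I$ denotes the support of $\sum a_iv_i=0$ then the $v_i$ with $i\in I$ have rank $|I|-1$ and the integer relations supported on $I$ are exactly the multiples $\mathbb{Z}\cdot(a_i)_{i\in I}$. I will also use that both the a.f.r.\ property and the displayed coefficient condition are invariant under replacing some $v_i$ by $-v_i$; for the former this is read off from (2) by sending a box $[a_i,b_i]$ to $[-b_i,-a_i]$. Hence I may normalize signs whenever convenient and, in particular, assume the relevant coefficients are positive.

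For the implication ``a.f.r.\ $\Rightarrow$ coefficient condition'' I argue by contraposition. Suppose an elementary relation $r=(r_i)_{i\in I}$ violates the conclusion, so that either some $|r_j|\ge 3$, or at least two indices carry $|r_i|=2$. Taking $w=0$, the real solutions of $\sum_{i\in I}z_iv_i=0$ supported on $I$ form the line $\mathbb{R}\cdot r$, whose integral points are exactly $\mathbb{Z}\cdot r$ by primitivity. For integer bounds $A_i<B_i$ the point $s\,r$ lies in the box iff $s$ lies in $J_i=\{s:A_i\le s\,r_i\le B_i\}$, an interval with endpoints in $\tfrac{1}{|r_i|}\mathbb{Z}$ and length $(B_i-A_i)/|r_i|\ge 1/|r_i|$. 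Thus a box witnessing the failure of (2) is merely a choice of such intervals whose intersection is nonempty yet contains no integer. If $|r_j|\ge 3$ I put $A_j=1,B_j=2$, so $J_j=[\,1/|r_j|,2/|r_j|\,]\subset(0,1)$; if two indices $j,j'$ carry $|r_i|=2$ I pin them with $J_j=[0,\tfrac12]$ and $J_{j'}=[\tfrac12,1]$, whose intersection is $\{\tfrac12\}$. Making the remaining boxes wide enough to contain the target then leaves the common $s$--interval nonempty and integer--free, so the resulting box together with $w=0$ admits a rational solution but no integral one, contradicting (2).

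The reverse direction is the substantial one. Assume every elementary relation satisfies the coefficient condition; I must verify (2), equivalently (3). Fix $I$ and $w\in\sum_{i\in I}\mathbb{Z}v_i$ together with a rational point $x\in[0,1]^I$ on the fiber $F=\{z:\sum_{i\in I}z_iv_i=w\}$, and I must produce an integral point of $F\cap[0,1]^I$. I plan to round $x$ towards such a point while staying in $F\cap[0,1]^I$. If the $v_i$ $(i\in I)$ are independent then $F$ is a single point, which is forced to be integral because $w$ lies in the lattice; otherwise $I$ contains a circuit $C$, and I move from $x$ along the corresponding elementary relation $c$, in whichever direction and by the largest amount $t$ that keeps the point in the box, thereby driving a fresh coordinate to an integer value. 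Because $c$ has all entries in $\{0,\pm1,\pm2\}$ with at most one of absolute value $2$, at most one coordinate ever travels at double speed, and this is what keeps the procedure under control.

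The main obstacle is precisely that lone coefficient $2$: it makes the vertices of $F\cap[0,1]^I$ only half--integral rather than integral (vertices of the shape $\tfrac12(v_i+v_j)$ already occur in the smallest examples), so one cannot simply invoke integrality of vertices as in the Farkas--related case of \cite{Ar}. The heart of the argument is therefore a rounding lemma: under the coefficient hypothesis, a nonempty rational polytope $F\cap[0,1]^I$ with $w$ in the lattice must nevertheless contain a genuine lattice point; equivalently, the half--integrality caused by the single coefficient $2$ can always be resolved by a further move along an elementary relation without leaving the box. I expect to establish this by induction on $|I|$ together with a potential counting the non--integral coordinates, showing that a suitable elementary relation always lets one strictly decrease the potential while remaining feasible, so that termination yields the required integral representation and hence (3).
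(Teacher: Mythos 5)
Your first direction (a.f.r.\ implies the coefficient condition) is correct: the contrapositive box construction, together with the fact that the integer relations supported on the support of an elementary relation are exactly $\mathbb{Z}\cdot(a_i)$, is the same fact the paper exploits, just packaged as an explicit counterexample to condition (2) of Lemma~\ref{almost Farkas--related, first characterization} rather than as a direct derivation of constraints on the coefficients. That half is fine.

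The converse, however, is where the entire substance of the proposition lies, and there your proposal stops at exactly the point where a proof is needed. You correctly identify the obstruction --- the single coefficient of absolute value $2$ makes the polytope $F\cap[0,1]^I$ have half-integral vertices, so the vertex-integrality argument available in the Farkas-related case fails --- but the ``rounding lemma'' that is supposed to overcome it is only announced (``I expect to establish this\dots''), not proved. The difficulty is genuine: moving along an elementary relation by the maximal feasible amount can increase the number of fractional coordinates before the stopping point (from the point $(0,\tfrac12)$ with $v_1=(1)$, $v_2=(2)$ and circuit $(2,-1)$, the first coordinate passes through non-integral values on the way to $(1,0)$), at a half-integral vertex no single obvious move decreases your potential, and nothing in your sketch specifies how the circuit and direction are chosen or rules out cycling. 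For comparison, the paper's induction on $m$ requires several nontrivial ingredients with no counterpart in your outline: a claim that some subset $J\subset\{1,\dots,m\}$ of the vectors forms a $\mathbb{Z}$-basis of $\sum_i\mathbb{Z}v_i$, proved by an exchange argument that itself uses the coefficient hypothesis on a relation $2v_j+\sum_{i\in J}a_iv_i=0$; a separate treatment of the degenerate case in which the remaining vectors satisfy $v_j=\pm 2v_i$, handled by a direct counting computation; and, in the main case, a projection $p$ killing one vector $v_1$, the observation that elementary relations among the $p(v_i)$ lift to elementary relations among the $v_i$ (so that induction applies to the projected vectors), followed by an iterative correction of the $v_1$-coefficient (the paper's Cases 1 and 2). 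Until your rounding lemma is actually established --- with a precise potential, a rule for selecting the move, and a termination argument --- the hard direction of the proposition remains unproved.
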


\begin{proof}

First, the "only if direction" is proved.  Suppose that  $v_1,...,v_m$ are  a.f.r and 
let $\sum_{i=1}^m a_i v_i=0$ be an elementary integral relation. Without loss of generality, we may assume that 
$\{i|a_i\neq 0\}=\{1,...,r\}$. By symmetry, it is enough to show that $|a_1|\in\{1,2\}$ and if $|a_1|=2$ then $a_2,...,a_r\in  \{-1,1\}$. 
Since $0=v_1+\frac{a_2}{a_1}v_2+\cdots +\frac{a_r}{a_1}v_r$, using the fact that $v_1,...,v_m$ are  a.f.r, we see that there exist 
integers 
$$1\leq y_1\leq 2,[\frac{a_2}{a_1}]\leq y_{2}\leq  [\frac{a_2}{a_1}]+1,...,[\frac{a_r}{a_1}]\leq y_{r}\leq  [\frac{a_r}{a_1}]+1,$$
such that $0=y_1 v_1+\cdots + y_r v_r$. 
It is easy to see that the set of vectors $(x_1,...,x_r)\in \mathbb{Z}^r$ satisfying 
$\sum_{i=1}^r x_i v_i=0$, is equal to $\mathbb{Z} (a_1,...,a_r)$. It follows that $(y_1,...,y_r)=l(a_1,...,a_r)$ for an integer $l$. Since $y_1=1$ or $y_2=2$, we conclude that 
$|a_1|\in\{1,2\}$. Now suppose that $a_1=2$. Then  $(y_1,...,y_r)=(a_1,...,a_r)$, i.e. $[\frac{a_i}{2}]\leq y_{i}=a_i\leq  [\frac{a_i}{2}]+1$ for $i=2,...,r$. But in this case, there also exist  
integers 
$$0\leq z_1\leq 1,[\frac{a_2}{2}]\leq z_{2}\leq  [\frac{a_2}{2}]+1,...,[\frac{a_r}{2}]\leq z_{r}\leq  [\frac{a_r}{2}]+1,$$
such that $0=z_1 v_1+\cdots + z_r v_r$. We have  $(z_1,...,z_r)\in \mathbb{Z} (2,a_2,...,a_r)$ and $z_1=0$ or $z_1=1$. It follows that $z_1=0$ and 
consequently $z_2=\cdots =z_r=0$ which implies that  $[\frac{a_i}{2}]\in \{-1,0\}$ where $i=2,...,r$. This means that $a_i\in\{-2,-1,1\}$ where  $i=2,...,r$.
But, since $[\frac{a_i}{2}]\leq a_i\leq  [\frac{a_i}{2}]+1$ for every $i=2,...,r$, it is easy to see that we must have $a_i\in \{-1,1\}$ for every $i=2,...,r$. 
If $a_1=-2$, then one uses $(-a_1)v_1+\cdots +(-a_r)v_r=0$ to show that $-a_i\in \{-1,1\}$ for every $i=2,...,r$ which is obviously the same as  $a_i\in \{-1,1\}$ for every $i=2,...,r$.

Now we prove the "if direction". First note that without loss of generality we may assume that 
none of the vectors $v_1,...,v_m$ are zero.  
We use induction on $m$ to prove that the vectors $v_1,...,v_m$ satisfy 
Part (4) of Lemma \ref{almost Farkas--related, first characterization}. So
let  $I\subset \{1,...,m\}$ be a nonempty set and suppose that a vector $w\in   \sum_{i\in I} \mathbb{Z} v_i$ can be written as 
$k w=\sum_{i\in I} a_i v_i$ for integers $0\leq a_i \leq k $ ($i\in I$), where $k$ is an arbitrary natural number. 
We need to show that the vector $w$ can be written as $w=\sum_{i\in I} y_i v_i$ where $y_i\in\{0,1\}$ ($i\in I$).
First let $m=1$. Then we must have $I=\{1\}$ and since $w=a v_1$ for an integer $a$,
we have $a_1v_1=k w=k a v_1$ or $(a_1-k a) v_1=0$. If $w=0$, then $w=0.v_1$ and we are done. Otherwise, we must have 
$a=1$ because $0\leq a_1\leq k$ and $a_1-k a=0$.  So  $w=v_1$ and we are done with the case $m=1$. 

Next, we prove the inductive step. 
If $I\neq \{1,...,m\}$, then we are done, because   the vectors $v_i$ ($i\in I$) obviously 
satisfy the relevant condition on their elementary integral relations and therefore  are almost Farkas--related by induction.  
So we may assume that $I=\{1,...,m\}$. I claim that there exists $J\subset \{1,...,m\}$ such that the vectors $v_i (i\in J)$
form a basis for $ \sum_{i=1}^m \mathbb{Z} v_i$ over $\mathbb{Z}$. To prove this, take a set $J\subset \{1,...,m\}$ such that $v_i (i\in J)$ are
linearly independent and there does not exist a subset $J'\subset \{1,...,m\}$ with the property that the vectors $v_i (i\in J')$ are
linearly independent and $\sum_{i\in J} \mathbb{Z} v_i\subsetneq \sum_{i\in J'} \mathbb{Z} v_i$. If $v_j\notin \sum_{i\in J} \mathbb{Z} v_i$
for an index $1\leq j\leq m$, then the vectors $v_j, v_i (i\in J)$ are   linearly dependent
since otherwise  $\sum_{i\in J} \mathbb{Z} v_i\subsetneq \sum_{i\in J\cup \{j\}} \mathbb{Z} v_i$ would contradict the choice of $J$.
 It follows that there exists an elementary integral relation between $v_j, v_i (i\in J)$. This relation has to be of the form 
$2v_j+\sum_{i\in J} a_i v_i=0$ where $a_i\in \{-1,0,1\}$ for every $i\in I$. Choose $i\in J$ such that  $a_i\neq 0$ and set $J'=\{j\}\cup (J\setminus \{i\})$. Then the vectors 
$v_i (v_i\in J')$ are linearly independent and moreover $\sum_{i\in J} \mathbb{Z} v_i\subsetneq \sum_{i\in J'} \mathbb{Z} v_i$, a contradiction. It follows that  
the vectors $v_i (i\in J)$ generate $ \sum_{i=1}^m \mathbb{Z} v_i$ and the claim is proved.  Without loss of generality, we may assume that $J=\{v_1,...,v_r\}$.

If $r=m$, then the identity  
$w=\sum_{i=1}^m \frac{a_i}{k} v_i$ and the fact $w\in   \sum_{i=1 }^m \mathbb{Z} v_i$ imply that $\frac{a_1}{k},...,\frac{a_m}{k}\in \mathbb{Z}$. But 
$0\leq a_1,...,a_m \leq k $, so we must have $a_i\in\{0,k\}$, and consequently $w=\sum_{i=1}^m \frac{a_i}{k} v_i$ is the desired presentation.
In what follows, we may therefore assume that $r<m$. 

First suppose that there does not exist $1\leq i\leq r$ such that $v_i\in\sum_{i\neq j=1 }^m \mathbb{Z} v_j$. For every $r< j\leq m$, we have 
$v_j\in   \sum_{i=1 }^r \mathbb{Z} v_i$, i.e. $\sum_{i=1}^{r} b_i v_i+ v_j=0$ for some $b_1,...,b_r\in\mathbb{Z} v_j$. It is easy to see that this relation is in fact
an elementary integral relation. So there is at most one $b_i$ with $|b_i|>1$. If there is some $b_i$ with $|b_i|=1$ then we clearly have $v_i\in\sum_{i\neq j=1 }^m \mathbb{Z} v_j$. It follows that 
exactly one $b_i$ is not zero and it must be equal to $\pm2$, since otherwise $v_i\in \sum_{i\neq j=1 }^m \mathbb{Z} v_j$. It follows from this discussion that the set $\{1,...,m\}$ can be partitioned into 
$r$ disjoint subsets $I_1,...,I_r$ such that for every $i=1,...,r$, we have $i\in I_i$ and $v_j=\pm 2 v_i$ for every $j\in I_i$. 
In this case, if $w=d_1v_1+\cdots+d_r v_r$, then $k(d_i v_i)=\sum_{l\in I_i} a_l v_l$. Now, if $r>1$, then we can use induction,
to find suitable presentations for each $d_i v_i$ and by adding them together, one can find an acceptable representation for $w$. 
So we may assume that $r=1$. To proceed, we set $I_{\pm}=\{2\leq i \leq m|v_i=\pm 2 v_1\}$.
We can write $k w= (a_1+2\sum_{i\in I_{+}}a_i-2\sum_{i\in I_{-}}a_i) v_1$.
 On the other hand we have $w=d v_1$ where $d\in \mathbb{Z}$. So $k d=a_1+2\sum_{i\in I_{+}}a_i-2\sum_{i\in I_{-}}a_i$. Since   
$$-2|I_{-}|k\leq a_1+2\sum_{i\in I_{+}}a_i-2\sum_{i\in I_{-}}a_i\leq k+2|I_{+}|k,$$
we conclude that $-2|I_{-}|\leq d\leq 1+2|I_{+}|$. Using this inequality, one can see that there are integers $0\leq l_1\leq |I_{+}|,0\leq l_2 \leq |I_{-}|$ 
and $l_3\in\{0,1\}$ such that $d=l_3+2l_1-2l_2$. By taking arbitrary sets $L_1\subset I_+$ and $L_2\subset I_2$ satisfying $|L_1|=l_1$ and $|L_2|=l_2$, we obtain
$w=l_3v_1+\sum_{i\in L_1\cup L_2}  v_i$ which is the desired presentation and we are done. So we may assume that  there exists $1\leq i\leq r$ such that $v_i\in\sum_{i\neq j=1 }^m \mathbb{Z} v_j$.
Without loss of generality we may assume that $v_1\in\sum_{j=2 }^m \mathbb{Z} v_j$.

It is easy to see that for every vector $u\in \sum_{i= 1}^m \mathbb{Z} v_i$, 
there is a unique vector $p(u)\in  \sum_{i=2}^r \mathbb{Z} v_i$, such that 
$u-p(u)\in \mathbb{Z} v_1$. Moreover the map $p: \sum_{i= 1}^m \mathbb{Z} v_i\to \sum_{i=2}^r \mathbb{Z} v_i$ is a $\mathbb{Z}$--linear 
map whose kernel is $\mathbb{Z} v_1$. I claim that 
the vectors $p(v_2),...,p(v_m)$, satisfy the condition in the proposition. In fact,   
 if  $\sum_{i=2}^m b_i p(v_i)=0$ is an elementary integral relation, then we have $\sum_{i=2}^m b_i v_i=-b_1 v_1$ for an integer $b_1$.  It is easy to see that  the relation $\sum_{i=1}^m b_i v_i=0$ is  
an elementary integral relation. So $b_1,b_2,...,b_m$ satisfy the relevant condition which implies that $b_2,...,b_m$ satisfy the condition as well. This proves the claim. 
In particular, by induction, we conclude that the vectors   $p(v_2),...,p(v_m)$ are  a.f.r.
Since $w\in \sum_{i=1}^m\mathbb{Z} v_i$, we have $p(w)\in  \sum_{i=2}^m \mathbb{Z} p(v_i)$. Moreover, we have 
$k p(w)=\sum_{i=2}^m a_i p(v_i)$. It follows that  there exist numbers  $y_2,...,y_m\in\{0,1\}$ such that $p(w)=\sum_{i=2}^m y_i p(v_i)$.
  Since the kernel of $p$ is $\mathbb{Z} v_1$, we conclude that $w=a v_1+ \sum_{i=2}^m y_i v_i$ for an
integer  $a$. If $a_1-a k=0$, then we must have $a_1=a=0$ or $a_1=k, a=1$ because $0\leq a_1\leq k$. In both cases, we obtain an acceptable presentation for $w$, and we are done. 
So suppose that $a_1-a k\neq 0$. At this stage, we consider two cases.\\
\newline
\textbf{Case 1}:  $a<0$.    
Then  
$$(a_1-a k)w=(a_1-a k-k)(a v_1+\sum_{i=2}^m y_i v_i)+\sum_{i=1}^m a_i v_i=$$
$$=(a+1)(a_1-a k)v_1+\sum_{i=2}^m ((a_1-a k-k)y_i+a_i)v_i.$$
So we have $(a_1-a k)(w-(a+1)v_1)=\sum_{i=2}^m ((a_1-a k-k)y_i+a_i)v_i$. Note that we have $a_1-a k >0$. 
It is easy to see that $0\leq (a_1-a k-k)y_i+a_i \leq a_1-a k$ for each $i=2,...,m$. Since 
$w-(a+1)v_1\in \sum_{i=2}^m\mathbb{Z} v_i$,  by induction there exist numbers $y'_2,...,y'_m\in\{0,1\}$, 
such that $w=(a+1)v_1+y'_2v_2+\cdots+y'_m v_m$. Continuing this process, since $a$ is negative, will produce a
presentation $w=\sum_{i=2}^m z_i v_i$ where  $z_2,...,z_m\in\{0,1\}$, and hence we are done.\\
\newline
\textbf{Case 2}: $a\geq 0$. In this case if $a\leq 1$, then we are done. Otherwise, the proof is similar to
the previous case. One just uses the following identity,     
$$(a k-a_1)(w-(a-1)v_1)=\sum_{i=2}^m ((a k-a_1-k)y_i+a_i)v_i.$$

\end{proof}

\end{subsection}


\begin{subsection}{Farkas' Lemma for almost Farkas--related vectors}

Using the definition of a.f.r vectors, one can derive the following theorem, consult \cite{Ar}.  

\begin{theorem}  \label{almost Farkas, integers, first}

Suppose that  vectors $v_1,...,v_m\in \mathbb{Z}^n$  are  almost Farkas--related and let  arbitrary integers $a_1\leq b_1,...,a_m\leq b_m$ 
be given.  Then a vector $w\in  \mathbb{Z}^n$ can be written as 
$w=\sum_{i=1}^m x_i v_i$ where $a_1\leq x_1\leq  b_1,...,a_m\leq x_m\leq b_m$ are integers if and only if 
$w-\sum_{a_i=b_i} a_i v_i\in \sum_{a_i\neq b_i} \mathbb{Z} v_i$ and
$$(u,w)\leq \sum_{i=1}^m a_i\frac{(u,v_i)-|(u,v_i)|}{2}+\sum_{i=1}^m b_i\frac{(u,v_i)+|(u,v_i)|}{2},$$
for   every  $\{v_1,...,v_m\}$--indecomposable point $[u]\in \mathbb{R}\mathbb{P}_+^{n-1} $.

\end{theorem}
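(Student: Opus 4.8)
The plan is to separate the two directions: the ``only if'' direction is a direct computation, while the ``if'' direction reduces to a rational feasibility statement followed by a single application of the almost Farkas--related property.

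For the ``only if'' direction, suppose $w=\sum_{i=1}^m x_i v_i$ with integers $a_i\leq x_i\leq b_i$. The lattice condition $w-\sum_{a_i=b_i}a_i v_i\in\sum_{a_i\neq b_i}\mathbb{Z}v_i$ is immediate, since $w-\sum_{a_i=b_i}a_i v_i=\sum_{a_i\neq b_i}x_i v_i$ with each $x_i$ an integer. For the inequality I would first observe that, for any $u$, the right--hand side equals $\max\{(u,\sum_i z_i v_i):a_i\leq z_i\leq b_i\}$: indeed $\frac{(u,v_i)-|(u,v_i)|}{2}=\min(0,(u,v_i))$ and $\frac{(u,v_i)+|(u,v_i)|}{2}=\max(0,(u,v_i))$, so the $i$--th summand is exactly the maximal contribution of the $i$--th coordinate subject to $a_i\leq z_i\leq b_i$. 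Since $x$ is one admissible choice, $(u,w)=\sum_i x_i(u,v_i)$ cannot exceed this maximum, and this holds for every $u$, in particular for every $\{v_1,\dots,v_m\}$--indecomposable $[u]$.

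For the ``if'' direction the two hypotheses play complementary roles. The integral lattice condition in particular forces the real affine condition $w-\sum_{a_i=b_i}a_i v_i\in\sum_{a_i\neq b_i}\mathbb{R}v_i$. Combined with the displayed inequality at every indecomposable $[u]$, this is precisely the hypothesis of the real (equivalently rational) Farkas' lemma of \cite{Ar}, the content of which is that the indecomposable directions are exactly those needed to test membership in the polytope $\{\sum_i z_i v_i:a_i\leq z_i\leq b_i\}$. Invoking that result produces real numbers $a_i\leq x_i\leq b_i$ with $w=\sum_i x_i v_i$; since all the data are integral, the feasible region is a rational polytope, so the $x_i$ may be taken rational.

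It then remains to upgrade the rational solution to an integral one. The lattice condition says exactly $w\in\sum_{a_i=b_i}a_i v_i+\sum_{a_i\neq b_i}\mathbb{Z}v_i$, so both hypotheses in the definition of almost Farkas--related vectors are met: $w$ lies in the correct coset and admits a rational representation inside the box $\prod_i[a_i,b_i]$. The a.f.r property then delivers integers $a_i\leq y_i\leq b_i$ with $w=\sum_i y_i v_i$, which completes the proof. I expect the middle step to be the main obstacle, namely justifying that checking the inequality only at indecomposable points suffices to guarantee real feasibility; this should rest entirely on the structure theory of indecomposable points and the real Farkas' lemma of \cite{Ar}, after which the integrality is supplied automatically by the defining property of almost Farkas--related vectors.
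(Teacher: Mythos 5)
Your proposal is correct and is essentially the derivation the paper itself intends: the paper offers no written proof, saying only that the theorem follows from the definition of a.f.r.\ vectors together with the machinery of \cite{Ar}, which is exactly your argument (real/rational feasibility via the indecomposable-point Farkas' lemma of \cite{Ar}, rationality of the polytope, then the a.f.r.\ property to upgrade to integers). Both directions of your argument are sound, including the observation that the right-hand side is the support function of the box-zonotope, so nothing further is needed beyond the cited result from Part I.
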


\end{subsection}


\begin{subsection}{Almost Farkas Graphs}

We follow the notations in Section 4.1 of \cite{Ar}. 

\begin{definition}
A connected (simple) graph $G$ is called almost Farkas if the vectors $\{v(e)\}_{e\in E(G)}$ are almost Farkas-related.  
\end{definition}

Here is a characterization of almost Farkas graphs. 

\begin{theorem}  \label{almost Farkas graphs}

A connected graph $G$ is almost Farkas if and only if every two disjoint simple odd cycles of $G$ cover the whole graph, i.e every vertex of $G$ belongs to one of the cycles.    

\end{theorem}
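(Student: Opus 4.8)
The plan is to translate the
algebraic criterion of Proposition~\ref{almost Farkas--related , second characterization}
into purely graph-theoretic language, using the dictionary from
Section~4.1 of~\cite{Ar} that assigns to each edge $e$ the vector
$v(e)=e_u-e_v$ (or its oriented analogue) in $\mathbb{Z}^{V(G)}$.
The key observation is that the elementary integral relations among the
$v(e)$ correspond exactly to the minimal cycles and minimal
even-joined structures in $G$: a single even cycle yields a relation
with all coefficients $\pm1$, while a more interesting relation arises
from a pair of odd cycles joined by a path.  So the heart of the matter
is to classify which edge-vectors produce an elementary integral
relation with a coefficient of absolute value $\geq 3$, or with two
coefficients of absolute value $2$, since by the Proposition these are
precisely the obstructions to being a.f.r.

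First I would recall the standard fact about the cycle space of a graph
over $\mathbb{Z}$: the minimal support integral relations among the
$v(e)$ are supported either on an even cycle (coefficients $\pm1$) or on
a \emph{theta}/\emph{dumbbell} configuration built from two odd cycles.
Concretely, if $C_1$ and $C_2$ are two vertex-disjoint odd cycles joined
by a simple path $P$, traversing $C_1$, then $P$ twice, then $C_2$ gives
an integral relation in which each edge of $P$ appears with coefficient
$\pm2$ and each edge of the two cycles with coefficient $\pm1$; this is
elementary, and it has a coefficient equal to $2$ for every edge on the
connecting path.  The Proposition forbids two coefficients of absolute
value $2$, so such a relation is allowed \emph{only} when the connecting
path has length $1$ (a single edge), and is forbidden as soon as the two
odd cycles are joined by a path of length $\geq 2$, or—more generally—as
soon as there is any ``room'' for a vertex outside both cycles.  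This is
exactly where the covering condition enters: two disjoint odd cycles
failing to cover $G$ means there is a vertex $x$ outside both, and since
$G$ is connected one can route a connecting structure through $x$,
manufacturing a forbidden elementary relation with two coefficients of
size $2$ (or one of size larger than $2$ when $x$ sits on a path linking
the cycles).

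For the two directions I would argue as follows.  For the
\textbf{``only if''} direction, suppose $G$ is almost Farkas but some pair
of disjoint odd cycles $C_1,C_2$ fails to cover $G$; pick a vertex $x$
not on either cycle and a shortest path in $G$ from $x$ into
$C_1\cup C_2$, and combine it with a shortest path between the two
cycles to build the dumbbell relation above.  The presence of $x$ forces
the connecting path to have length $\geq 2$, so the resulting elementary
relation has at least two coefficients equal to $2$ (or a coefficient
$\geq 3$ if the paths overlap), contradicting the Proposition.  For the
\textbf{``if''} direction, assume the covering condition and let
$\sum a_e v(e)=0$ be an arbitrary elementary integral relation; I would
show that its support, being a minimal dependent set, is either an even
cycle (all $|a_e|=1$) or a dumbbell of two odd cycles joined by a path
$P$, with $|a_e|=2$ exactly on $P$.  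If $|P|\geq 2$ then the two odd
cycles are disjoint and, because every internal vertex of $P$ lies
outside both cycles, they fail to cover $G$, contradiction; hence
$|P|\leq 1$, so at most one edge carries coefficient $2$, which is what
the Proposition demands.

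\textbf{The main obstacle} I anticipate is the bookkeeping in the
classification of elementary integral relations in terms of graph
structure—establishing rigorously that the \emph{only} minimal integral
dependencies among edge-vectors are even cycles and two-odd-cycle
dumbbells, and that a coefficient of absolute value $\geq 3$ can never
occur in an \emph{elementary} relation.  This requires carefully
analysing the $\mathbb{Z}$-module of relations and the parity
obstruction that distinguishes odd from even cycles; once that structural
lemma is in hand, matching ``two coefficients of size $2$'' with ``two
disjoint odd cycles plus a spare vertex'' and hence with the failure of
the covering property is comparatively routine.
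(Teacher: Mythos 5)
Your strategy is the same as the paper's: reduce everything to Proposition~\ref{almost Farkas--related , second characterization} together with the classification (Proposition 4.2 of \cite{Ar}) of elementary integral relations as even cycles and ``dumbbells'' (two vertex--disjoint odd cycles joined by an internally disjoint path, with coefficients $\pm 2$ exactly on the path edges). Your ``if'' direction is correct and is essentially the paper's argument: a relation violating the Proposition must be a dumbbell whose connecting path has length at least $2$, and the internal vertices of that path witness the failure of the covering condition. The genuine gap is in your ``only if'' direction, at the sentence ``the presence of $x$ forces the connecting path to have length $\geq 2$.'' Combining a shortest path from $x$ into $C_1\cup C_2$ with a shortest path between the two cycles does not route the connection through $x$, and no construction can. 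Consider the graph consisting of two triangles $a_1a_2a_3$ and $b_1b_2b_3$, the edge $a_1b_1$, and one pendant vertex $x$ adjacent only to $a_2$. The two triangles are disjoint odd cycles that do not cover $x$, yet the only path joining them whose internal vertices avoid both triangles is the single edge $a_1b_1$. Here the space of integral relations among the $v(e)$ is one--dimensional, generated by the dumbbell relation ($\pm 1$ on the six triangle edges, $2$ on $a_1b_1$, $0$ on $a_2x$), so by Proposition~\ref{almost Farkas--related , second characterization} this graph \emph{is} almost Farkas although the covering condition fails. Hence the implication you need (two disjoint odd cycles not covering $G$ $\Rightarrow$ existence of a forbidden elementary relation) is false, not merely unproved; also, your parenthetical ``or a coefficient $\geq 3$ if the paths overlap'' cannot occur, since elementary relations of graphs never have coefficients of absolute value $\geq 3$.

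You should be aware that the paper's own proof makes exactly the same leap --- it asserts, without justification, ``it follows that there exist a simple path of length $>1$ which connects the two cycles'' --- so your proposal faithfully reproduces the published argument, and your attempt to actually justify that step is precisely what exposes the flaw: the pendant example above appears to refute the ``only if'' direction of the theorem as stated. What Proposition~\ref{almost Farkas--related , second characterization} really characterizes, via the dictionary with elementary relations, is the condition that there is \emph{no} pair of vertex--disjoint odd cycles joined by a path of length at least $2$ whose internal vertices lie outside both cycles. That condition is implied by, but strictly weaker than, ``every two disjoint odd cycles cover $G$''; the pendant example separates the two. So either the statement of the theorem must be weakened to this path condition, or one would need an argument ruling out vertices hanging off the two--cycles--plus--edge configuration --- and the example shows no such argument can exist.
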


\begin{proof}

First suppose that $G$ is almost Farkas. Assume on the contrary that  there exist  two disjoint simple odd cycles of $G$ that do not cover the whole graph. 
It follows that there exist a simple path of length $>1$ which connects the two cycles. This gives rise to an elementary integral relation having at least two coefficients equal to $2$ or $-2$ (see  
Proposition 4.2 in \cite{Ar}), a contradiction by Proposition \ref{almost Farkas--related , second characterization}. 

To prove the other direction, we use   Proposition \ref{almost Farkas--related , second characterization}. An elementary integral relation of $G$ that violates the condition
in Proposition \ref{almost Farkas--related , second characterization} correspond to two disjoint odd cycles of $G$ that are connected by a path of length $>1$, see 
Proposition 4.2 in \cite{Ar}. This is however not possible since otherwise such two odd cycles would not cover the whole graph.

\end{proof}

The graph, drawn below, is an almost Farkas graph. But this graph is not  a Farkas graph, see \cite{Ar}. 
\newline
\newline

$\quad\quad\quad\quad\quad\quad\quad\quad\quad\quad$ \begin{tikzpicture}[shorten >=10pt,->] 
  \tikzstyle{vertex}=[circle,fill=black,minimum size=1pt,inner sep=.8pt]
  \node[vertex] (G_1) at (0,0) {};
  \node[vertex] (G_2) at (-1,1)   {};
  \node[vertex] (G_3) at (-1,-1)  {};
 \node[vertex] (G_4) at (2,0) {};
  \node[vertex] (G_5) at (3,1)   {};
  \node[vertex] (G_6) at (3,-1)  {};
 \draw (G_4) -- (G_5) -- (G_6) --(G_4) --cycle;
  \draw (G_1) -- (G_2) -- (G_3) --(G_1) --cycle;
 \draw (G_1) -- (G_4)  --cycle;
\end{tikzpicture}

\end{subsection}


\end{section}


\begin{section}{Weakly Farkas--related vectors}

The definition of weakly Farkas--related vectors is given below.  

\begin{definition}

Vectors  $v_1,...,v_m\in \mathbb{Z}^n$ are said to be \underline{weakly} Farkas--related, or w.f.r for short, if the following condition holds: 
For arbitrary integers $a_1< b_1,...,a_m< b_m$,
if a vector $w\in  \sum_{i=1}^m \mathbb{Z} v_i$ can be written as 
$w=\sum_{i=1}^m x_i v_i$ for rational numbers $a_1\leq  x_1\leq b_1,...,a_m\leq x_m\leq b_m$,
then there exist integers  $a_1\leq  y_1\leq b_1,...,a_m\leq y_m\leq b_m$ such that $w=\sum_{i=1}^m y_i v_i$.

\end{definition} 

\begin{subsection}{A characterization of weakly Farkas--related vectors}

We begin with the following lemma. 

\begin{lemma} \label{weakly Farkas--related, first characterization}

Let $v_1,...,v_m\in \mathbb{Z}^n$ be arbitrary vectors. Then the following conditions are equivalent.\\
(1) The vectors $v_1,...,v_m\in \mathbb{Z}^n$ are w.f.r.\\
(2) For arbitrary integers $a_1< b_1,...,a_m< b_m$, if there exist rational numbers $a_1\leq  x_1\leq b_1,...,a_m\leq x_m\leq b_m$
such that $\sum_{i=1}^m x_i v_i=0$, then there exist integers
$a_1\leq  y_1\leq b_1,...,a_m\leq y_m\leq b_m$ such that $\sum_{i=1}^m y_i v_i=0$.\\
(3) For arbitrary integers $a_1,...,a_m$, if there exist rational numbers $a_1\leq  x_1\leq a_1+1,...,a_m\leq x_m\leq a_m+1$
such that $\sum_{i=1}^m x_i v_i=0$, then there exist integers
$a_1\leq  y_1\leq a_1+1,...,a_m\leq y_m\leq a_m+1$ such that $\sum_{i=1}^m y_i v_i=0$.
 
\end{lemma}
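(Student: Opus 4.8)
The plan is to prove the equivalence as a short cycle, isolating the two genuine reductions from the two trivial specializations. Observe first that (2) is literally the special case $w=0$ of (1), since $0\in\sum_{i=1}^m\mathbb{Z} v_i$ always; and (3) is literally the special case $b_i=a_i+1$ of (2), where the hypothesis $a_i<b_i$ of (2) holds because $a_i<a_i+1$. Hence (1)$\Rightarrow$(2) and (2)$\Rightarrow$(3) need no argument, and it remains only to establish (2)$\Rightarrow$(1) and (3)$\Rightarrow$(2).

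For (2)$\Rightarrow$(1) I would use a translation by an integer vector. Suppose $w=\sum_{i=1}^m x_i v_i$ with $a_i\le x_i\le b_i$ and $w\in\sum_{i=1}^m\mathbb{Z} v_i$, so that $w=\sum_{i=1}^m c_i v_i$ for some integers $c_i$. Subtracting gives $\sum_{i=1}^m (x_i-c_i)v_i=0$ with $a_i-c_i\le x_i-c_i\le b_i-c_i$, where $a_i-c_i$ and $b_i-c_i$ are integers satisfying $a_i-c_i<b_i-c_i$. Applying (2) to these shifted bounds produces integers $z_i$ with $a_i-c_i\le z_i\le b_i-c_i$ and $\sum_{i=1}^m z_i v_i=0$. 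Then $y_i:=z_i+c_i$ are integers with $a_i\le y_i\le b_i$ and $\sum_{i=1}^m y_i v_i=\sum_{i=1}^m z_i v_i+\sum_{i=1}^m c_i v_i=w$, as required.

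For (3)$\Rightarrow$(2) the idea is to shrink each interval $[a_i,b_i]$ to a unit subinterval that still contains $x_i$, and then invoke (3). Given $\sum_{i=1}^m x_i v_i=0$ with rational $a_i\le x_i\le b_i$, I would define an integer $c_i$ for each $i$ by setting $c_i=b_i-1$ when $x_i=b_i$ and $c_i=[x_i]$ otherwise. In either case $c_i\le x_i\le c_i+1$, and using $a_i<b_i$ one checks $a_i\le c_i$ and $c_i+1\le b_i$. Then (3), applied with the integers $c_i$, yields integers $c_i\le y_i\le c_i+1$ with $\sum_{i=1}^m y_i v_i=0$, and the chain $a_i\le c_i\le y_i\le c_i+1\le b_i$ completes the reduction.

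The argument is essentially mechanical; the only step that is not completely automatic is the boundary bookkeeping in (3)$\Rightarrow$(2), where one must ensure $[c_i,c_i+1]\subseteq[a_i,b_i]$ even when $x_i$ attains the upper endpoint $b_i$, which is exactly why the choice $c_i=b_i-1$ is made there. This is the homogeneous analogue of the floor-based reduction carried out in the proof of (3)$\Rightarrow$(2) in Lemma \ref{almost Farkas--related, first characterization}, and it is simpler here because no restriction to a subset $I$ is involved.
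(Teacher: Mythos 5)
Your proposal is correct and follows essentially the same route as the paper: the same integer-translation argument for (2)$\Rightarrow$(1), and the same unit-interval shrinking for (3)$\Rightarrow$(2) (your choice $c_i=b_i-1$ when $x_i=b_i$ coincides with the paper's $[x_i]-1$, since $b_i$ is an integer). The only cosmetic difference is that you spell out the boundary bookkeeping $a_i\le c_i\le y_i\le c_i+1\le b_i$, which the paper leaves as "clearly".
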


\begin{proof}

It is obvious that (1)$\Rightarrow$(2) and (2)$\Rightarrow$(3). 
To prove  (2)$\Rightarrow$(1), suppose that  integers $a_1 < b_1,...,a_m<b_m$  are given and
a vector $w\in  \sum_{i=1}^m \mathbb{Z} v_i$ can be written as 
$w=\sum_{i=1}^m x_i v_i$ for rational numbers $a_1\leq  x_1\leq b_1,...,a_m\leq x_m\leq b_m$. 
Since $w\in  \sum_{i=1}^m \mathbb{Z} v_i$, there exist $c_1,...,c_m\in  \mathbb{Z}$ such that $w=\sum_{i=1}^m c_i v_i$. 
We have $\sum_{i=1}^m (x_i-c_i) v_i=0$, where 
$$a_1-c_1\leq  x_1-c_1\leq b_1-c_1,...,a_m-c_m\leq x_m-c_m\leq b_m-c_m.$$
Therefore there exist integers
$$a_1-c_1\leq  y_1\leq b_1-c_1,...,a_m-c_m\leq y_m\leq b_m-c_m$$
such that $\sum_{i=1}^m y_i v_i=0$.  The presentation $w=\sum_{i=1}^m (y_i+c_i) v_i$ is the desired presentation because
$$a_1\leq  y_1+c_1\leq b_1,...,a_m\leq y_m+c_m\leq b_m.$$ 

To prove  (3)$\Rightarrow$(2), suppose that integers $a_1 < b_1,...,a_m<b_m$  are given and 
$\sum_{i=1}^m x_i v_i=0$ for rational numbers $a_1\leq  x_1\leq b_1,...,a_m\leq x_m\leq b_m$. 
Setting $c_i=[x_i]$ if $x_i\neq b_i$ and $c_i=[x_i]-1$ if $x_i=b_i$, we have 
$$c_1\leq  x_1\leq c_1+1,...,c_m\leq x_m\leq c_m+1.$$ 
Therefore there exist 
integers 
$$c_1\leq  y_1\leq c_1+1,...,c_m\leq y_m\leq c_m+1,$$
 such that $\sum_{i=1}^m y_i v_i=0$. We clearly have 
$a_1\leq  y_1\leq b_1,...,a_m\leq y_m\leq b_m$ and we are done.

\end{proof} 

A relatively simple characterization of w.f.r vectors is given in the following proposition.  
 
\begin{proposition} \label{weakly Farkas--related, second characterization}

Let $v_1,...,v_m\in \mathbb{Z}^n$ be arbitrary vectors. The vectors $v_1,...,v_m$  are w.f.r 
if and only if the following condition holds:
Suppose that integers $a_1,...,a_m\in\{-1,0,1\}$ are given such that there exists only one index $1\leq s\leq m$ with $a_s=1$. 
If there exist rational numbers 
$$a_1\leq  x_1\leq a_1+1,...,a_m\leq x_m\leq a_m+1$$
such that $\sum_{i=1}^m x_i v_i=0$, then there exist integers
$$a_1\leq  y_1\leq a_1+1,...,a_m\leq y_m\leq a_m+1$$
 such that $\sum_{i=1}^m y_i v_i=0$.
 
\end{proposition}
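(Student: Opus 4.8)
The forward direction is immediate: the condition displayed in the proposition is nothing but the special case of condition (3) of Lemma \ref{weakly Farkas--related, first characterization} in which the lower corners are restricted to $a_i\in\{-1,0,1\}$ with a single index carrying the value $1$. Since being w.f.r is equivalent to that condition (3), it certainly implies this sub-case. All the work is in the converse, so I would assume the restricted condition (call it $(R)$) and aim to recover the full condition (3), which by the lemma is the same as being w.f.r.

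Set $L=\{x\in\mathbb{R}^m:\sum_i x_iv_i=0\}$ and $\Lambda=L\cap\mathbb{Z}^m$. Condition (3) says exactly that every integer unit box $B_a=\prod_i[a_i,a_i+1]$, indexed by its lower corner $a\in\mathbb{Z}^m$, which meets $L$ already meets $\Lambda$. I would first record the three symmetries of this assertion for the fixed family $v_1,\dots,v_m$: permutation of coordinates; coordinate negation $v_i\mapsto -v_i$, under which $a_i\mapsto -a_i-1$; and translation of the corner by an integer relation, $a\mapsto a+c$ with $c\in\Lambda$, which carries $L\cap B_a$ and $\Lambda\cap B_a$ bijectively onto the corresponding intersections for $B_{a+c}$. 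The natural progress measure compatible with all three is $\Phi(a)=\sum_i|2a_i+1|$, which is permutation- and negation-invariant and satisfies $\Phi(a)\ge m$, with equality precisely when every $a_i\in\{-1,0\}$.

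The plan is then an induction that drives $a$ toward a $\Phi$-minimal representative of its coset $a+\Lambda$, with two clean base situations. If all $a_i\in\{-1,0\}$, then $0\in B_a\cap\Lambda$, so the box already contains the lattice relation $y=0$ and there is nothing to prove; this settles for free every box straddling the origin. If exactly one coordinate is bad, i.e. lies outside $\{-1,0\}$, and equals $1$ while all others lie in $\{-1,0\}$, then $B_a$ is, up to the negation symmetry, precisely a box allowed by $(R)$, so the hypothesis applies directly. After using $\Lambda$-translation to minimize $\Phi$ and the negation symmetry to push the bad coordinates to the positive side, the only remaining cases are those in which the $\Phi$-minimal corner still has either two or more bad coordinates, or a coordinate with $a_i\ge 2$, i.e. $\Phi(a)>m+2$.

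The main obstacle is exactly this reduction step: starting from a fractional certificate $x^*\in L\cap B_a$ for such a large box, I must manufacture a certificate for a box of strictly smaller $\Phi$, or of the shape permitted by $(R)$, so that induction or the hypothesis finishes. I would do this by a peeling argument on $x^*$ in the spirit of the Case 1 / Case 2 computations in the proof of Proposition \ref{almost Farkas--related , second characterization}: pick a bad coordinate $j$ with $x^*_j\ge a_j\ge 1$, and use a suitable integer relation adjusting coordinate $j$ to rewrite $x^*$ as a fractional solution sitting in a box whose $j$-th corner has been lowered, while every other coordinate stays inside its unit interval. The two delicate points are to guarantee that such a reducing relation exists, which is where the hypothesis applied to proper subsystems, together with the structure of elementary integral relations, must be fed in, and to control the simultaneous bookkeeping so that lowering one coordinate does not eject another coordinate from its interval. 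This double bookkeeping, exactly as in the algebraic identities of the a.f.r proof, is the technical heart of the argument; once it is in place, minimality of $\Phi$ forces the corner into one of the two base shapes, and $(R)$ (or the trivial solution $y=0$) completes the proof.
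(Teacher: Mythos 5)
Your setup---reading the forward direction as a special case of condition (3) of Lemma \ref{weakly Farkas--related, first characterization}, the three symmetries (permutation, negation $a_i\mapsto -a_i-1$, translation by integer relations), and the potential $\Phi(a)=\sum_i|2a_i+1|$---coincides exactly with the skeleton of the paper's proof, including its precise induction measure. But your proposal has a genuine gap exactly where you yourself locate the ``technical heart'': you never construct the reducing integer relation, you only postulate that one can be found ``in the spirit of'' the a.f.r argument, using ``the structure of elementary integral relations'' and ``the hypothesis applied to proper subsystems.'' Neither tool is available here. The paper has no characterization of w.f.r vectors in terms of elementary integral relations (it explicitly remarks, right after this proposition, that finding one analogous to Proposition \ref{almost Farkas--related , second characterization} is open), and the restricted condition $(R)$ is a statement about the full system $v_1,\dots,v_m$; no subsystem version of it is established, nor is it clear how to extract one. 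So the step that does all the work is missing, and the route you sketch for filling it would not go through as described.

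What actually closes the gap in the paper is a scaling trick that your proposal never mentions. Choose $r$ with $|x_r|=\max_i|x_i|$; after the negation symmetry one may assume either some such $r$ has $a_r>0$, or all maximal coordinates have $a_i=0$. The latter case is degenerate: then $|x_i|\le 1$ for all $i$, which forces every $a_i\in\{-1,0\}$, and $y=0$ works. In the former case $x_r\ge a_r\ge 1>0$, and one divides the whole fractional relation by $x_r$: homogeneity gives $\sum_i (x_i/x_r)v_i=0$, and since $|x_i/x_r|\le 1$ for all $i$ with $x_r/x_r=1$, the rescaled point lies in a box whose corner $a'$ has $a'_r=1$ and $a'_i\in\{-1,0\}$ otherwise---exactly a box of the restricted shape $(R)$. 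The hypothesis then produces an integer relation $y'$ in that box, and a direct check (using that $|x_r|$ is maximal and $a_r>0$) shows $\Phi(a-y')<\Phi(a)$, so translating the corner by $y'$ and invoking the induction hypothesis finishes. In short, the reducing relation is manufactured from the fractional certificate itself by normalization, not from any structural theory of relations among the $v_i$; without that idea your induction cannot get started.
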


\begin{proof}

Using Lemma \ref{weakly Farkas--related, first characterization}, one only needs to prove that 
the desired property is equivalent to Property (3) in Lemma \ref{weakly Farkas--related, first characterization}. Clearly if Property (3) holds  in Lemma \ref{weakly Farkas--related, first characterization}, 
then  the property in the proposition holds as well.    

To prove the converse, suppose that  integers $a_1,...,a_m$ are given and there exist rational numbers $a_1\leq  x_1\leq a_1+1,...,a_m\leq x_m\leq a_m+1$
such that $\sum_{i=1}^m x_i v_i=0$.  We need to prove that there exist integers
$$a_1\leq  y_1\leq a_1+1,...,a_m\leq y_m\leq a_m+1$$ such that $\sum_{i=1}^m y_i v_i=0$. 
We prove this by induction on $\sum_ {i=1}^m|2a_i+1|$. If $\sum_ {i=1}^m|2a_i+1|\leq m$, then $a_1,...,a_m\in\{-1,0\}$ in which case
the numbers $y_1=...=y_n=0$ are the desired numbers. 

Now we prove the inductive step. 
We consider three cases.\\
Case 1: There exists $1\leq r\leq m$ such that $|x_r|=max_{i=1,...,m}|x_i|$ and $a_r>0$. Without loss of generality we may assume $r=1$.
We define $a'_1,...,a'_m$ as follows
\[ a'_i = \left\{ 
  \begin{array}{l l}
1 & \quad \text{$i=1$}\\
0 & \quad \text{$a_i\geq 0$ and $i>1$}\\
-1 & \quad \text{$a_i< 0$}\\
 \end{array} \right.\]    
It is easy to see that $a'_i\leq \frac{x_i}{x_1}\leq a'_i+1$ for every $i=1,...,m$. Since $\sum_{i=1}^m \frac{x_i}{x_1}v_i=0$ and the numbers
$a'_1,...,a'_m$ satisfy the condition in the proposition, we conclude that
there exist integers
$$a'_1\leq  y'_1\leq a'_1+1,...,a'_m\leq y'_m\leq a'_m+1$$
 such that $\sum_{i=1}^m y'_i v_i=0$. Consider the numbers 
$a_1-y'_1,...,a_m-y'_m$. I claim that $\sum_ {i=1}^m|2(a_i-y'_i)+1|<\sum_ {i=1}^m|2a_i+1|$. In fact we have $|2(a_1-y'_1)+1|<|2a_1+1|$ because $y'_1\in\{1,2\}$ and $a_1>0$. So it is enough  
to show $|2(a_i-y'_i)+1|\leq|2a_i+1|$ for every $i=2,...,m$. If $a_i\geq 0$, then $y'_i\in\{0,1\}$ and therefore $|2(a_i-y'_i)+1|\leq|2a_i+1|$ (note that $|2(0-1)+1|=|2(0)+1|$). 
Similarly if $a_i< 0$, then $y'_i\in\{-1,0\}$ which implies that $|2(a_i-y'_i)+1|\leq|2a_i+1|$. 
Now, we have $\sum_{i=1}^m (x_i-y'_i) v_i=0$ where 
$$a_1-y'_1\leq  x_1-y'_1\leq (a_1-y'_1)+1,...,a_m-y'_m\leq x_m-y'_m\leq (a_m-y'_m)+1.$$
Therefore, by induction,  there exist integers
$$a_1-y'_1\leq  y''_1\leq (a_1-y'_1)+1,...,a_m-y'_1\leq y''_m\leq (a_m-y'_1)+1,$$
 such that $\sum_{i=1}^m y''_i v_i=0$. Finally, we have $\sum_{i=1}^m (y'_i+y''_i) v_i=0$ where 
$$a_1\leq  y'_1+y''_1\leq a_1+1,...,a_m\leq y'_m+y''_m\leq a_m+1,$$
and we are done.\\
Case 2: For all  $1\leq i\leq m$ satisfying $|x_r|=max_{j=1,...,m}|x_j|$ we have $a_i\leq 0$, and there exists $1\leq r\leq m$ such that $|x_r|=max_{j=1,...,m}|x_j|$ and $a_r=0$. 
In this case, we have $|x_r|\leq 1$ which implies that $|x_i|\leq 1$ for every $i=1,...,m$. Since $a_i\leq x_i\leq a_{i+1}$ for every  $i=1,...,m$,
we must have $-1\leq a_i\leq 1$ for every  $i=1,...,m$. If there exists an index $1\leq s\leq m$ such that $a_s=1$, then we have  $x_s=1$, since $|x_s|\leq 1$ and 
$a_s=1\leq x_s\leq a_s+1=2$.  But then $|x_s|=max_{j=1,...,m}|x_j|$ and $a_s>0$, a contradiction. Therefore we have $a_i\in\{-1,0\}$ for every  $i=1,...,m$. It means that the numbers 
 $a_1,...,a_m$ satisfy the condition in the proposition and we are therefore done.  \\
Case 3: For all $1\leq i\leq m$ satisfying $|x_i|=max_{j=1,...,m}|x_j|$, we have $a_i<0$. Set
$b_1=-a_1-1,...,b_m=-a_m-1$. We have   
$$b_1\leq - x_1\leq b_1+1,...,b_m\leq -x_m\leq b_m+1.$$
Moreover for all $1\leq i\leq m$ satisfying $|-x_i|=max_{i=1,...,m}|-x_i|$, we have $b_i\geq 0$. Since $\sum_ {i=1}^m|2b_i+1|=\sum_ {i=1}^m|2a_i+1|$
in view of the identity $|2a+1|=|2(-a-1)+1|$, we may use Case 1 or Case 2 to obtain integers 
$$b_1\leq y_1\leq b_1+1,...,b_m\leq y_m\leq b_m+1,$$
such that $\sum_{i=1}^m y_i v_i=0$. The numbers $-y_1,...,-y_m$ satisfy the desired condition, i.e. 
$$a_1\leq  -y_1\leq a_1+1,...,a_m\leq -y_m\leq a_m+1$$
and $\sum_{j=1}^m (-y_j) v_j=0$.  Therefore the proof is complete.

\end{proof}

Compared with Proposition  \ref{weakly Farkas--related, first characterization},
it is substantially easier to use Proposition  \ref{weakly Farkas--related, second characterization} in order to check if given vectors are w.f.r. 
It could however be possible to obtain a simpler characterization that is 
similar to the characterization of weakly Farkas-related vectors given in Proposition  \ref{almost Farkas--related , second characterization}.

\end{subsection}


\begin{subsection}{Farkas' Lemma for weakly Farkas--related vectors}

Using the definition of w.f.r vectors, one can derive the following theorem.  

\begin{theorem}  \label{weakly Farkas, integers, first}

Suppose that  vectors $v_1,...,v_m\in \mathbb{Z}^n$  are  weakly Farkas--related and let  arbitrary integers $a_1< b_1,...,a_m<b_m$ 
be given.  Then a vector $w\in  \mathbb{Z}^n$ can be written as 
$w=\sum_{i=1}^m x_i v_i$ where  $a_1\leq x_1\leq  b_1,...,a_m\leq x_m\leq b_m$ are integers if and only if 
$w\in \sum_{i=1}^m \mathbb{Z} v_i$ and
$$(u,w)\leq \sum_{i=1}^m a_i\frac{(u,v_i)-|(u,v_i)|}{2}+\sum_{i=1}^m b_i\frac{(u,v_i)+|(u,v_i)|}{2},$$
for   every  $\{v_1,...,v_m\}$--indecomposable point $[u]\in \mathbb{R}\mathbb{P}_+^{n-1} $.

\end{theorem}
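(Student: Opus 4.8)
The plan is to deduce the theorem from the definition of weakly Farkas--related vectors, using a convex--duality argument to convert the displayed inequalities into the existence of a rational representation of $w$ inside the box $\prod_{i=1}^m[a_i,b_i]$.

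For the ``only if'' direction I would argue directly. Suppose $w=\sum_{i=1}^m y_i v_i$ with integers $a_i\le y_i\le b_i$; then $w\in\sum_{i=1}^m\mathbb{Z} v_i$ is immediate. Fixing any $u$ and noting that $\frac{(u,v_i)-|(u,v_i)|}{2}=\min\{(u,v_i),0\}$ and $\frac{(u,v_i)+|(u,v_i)|}{2}=\max\{(u,v_i),0\}$, a termwise check according to the sign of $(u,v_i)$ gives $y_i(u,v_i)\le a_i\min\{(u,v_i),0\}+b_i\max\{(u,v_i),0\}$ (using $y_i\le b_i$ when $(u,v_i)\ge 0$ and $y_i\ge a_i$ when $(u,v_i)<0$); summing over $i$ yields the inequality for every $u$, hence in particular for every indecomposable direction.

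For the ``if'' direction the key object is the support function of the translated zonotope $Z=\{\sum_{i=1}^m x_i v_i:\ a_i\le x_i\le b_i\}$, namely $h_Z(u)=\sum_{i=1}^m\bigl(a_i\min\{(u,v_i),0\}+b_i\max\{(u,v_i),0\}\bigr)$, which is exactly the right-hand side of the displayed inequality. First I would promote the hypothesis from indecomposable directions to arbitrary $u$: by the theory of indecomposable points in \cite{Ar}, every direction $u$ splits as a sum $u=\sum_j u_j$ of $\{v_1,\dots,v_m\}$--indecomposable directions that preserves the sign of each $(u,v_i)$, and along such a splitting $h_Z$ is additive; hence $(u,w)=\sum_j(u_j,w)\le\sum_j h_Z(u_j)=h_Z(u)$ for every $u$. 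By the characterization of a point of a compact convex set through its supporting halfspaces, $(u,w)\le h_Z(u)$ for all $u$ is equivalent to $w\in Z$, i.e. to the existence of rational numbers $a_i\le x_i\le b_i$ with $w=\sum_{i=1}^m x_i v_i$ (rationality is free, since $\{x:\ \sum x_i v_i=w,\ a_i\le x_i\le b_i\}$ is a nonempty rational polyhedron).

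Finally, since $w\in\sum_{i=1}^m\mathbb{Z} v_i$, the strict inequalities $a_i<b_i$ together with the rational representation just obtained place us precisely in the hypothesis of the definition of weakly Farkas--related vectors; applying that definition (equivalently Lemma \ref{weakly Farkas--related, first characterization}) produces integers $a_i\le y_i\le b_i$ with $w=\sum_{i=1}^m y_i v_i$, which completes the proof. I expect the main obstacle to be the middle step, promoting the inequalities from indecomposable directions to all $u$: this is where the geometry of indecomposable points from \cite{Ar} is essential, while the termwise sign bookkeeping and the closing appeal to the w.f.r.\ definition are routine.
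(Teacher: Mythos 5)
Your proposal is correct and takes essentially the approach the paper intends: the paper states this theorem without a written proof, deferring to the derivation in \cite{Ar}, and your argument---the termwise sign bookkeeping for the ``only if'' direction, the promotion from $\{v_1,\dots,v_m\}$--indecomposable directions to arbitrary $u$ via sign-compatible decompositions and additivity of the zonotope support function, the rational-polyhedron observation, and the closing appeal to the definition of weakly Farkas--related vectors with $a_i<b_i$---is precisely that derivation adapted to the w.f.r.\ class. No gaps.
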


\end{subsection}


\begin{subsection}{Weakly Farkas Graphs}

We follow the notations in Section 4.1 of \cite{Ar}. 

\begin{definition}

A connected simple graph $G$ is called weakly Farkas if the vectors $\{v(e)\}_{e\in E(G)}$ are weakly Farkas-related.  
 
\end{definition} 

A graph is said to satisfy the odd-cycle condition if for every two vertex-disjoint simple odd cycles of the graph there exist two vertexes, one from each cycle, that are
connected by an edge, also see \cite{FHM}.

\begin{theorem}  

A connected simple graph is weakly Farkas if and only if it satisfies the odd-cycle condition.

\end{theorem}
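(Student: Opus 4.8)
The plan is to mirror the proof of Theorem~\ref{almost Farkas graphs} by reducing the graph-theoretic statement to the algebraic characterization of w.f.r vectors in Proposition~\ref{weakly Farkas--related, second characterization}, and then to translate the condition in that proposition into the combinatorial odd-cycle condition via the structure of elementary integral relations of the edge-vectors $\{v(e)\}_{e\in E(G)}$ described in Proposition~4.2 of \cite{Ar}. First I would recall from \cite{Ar} that the elementary integral relations among the edge-vectors of a connected graph correspond exactly to either single even cycles (all coefficients $\pm 1$) or to pairs of vertex-disjoint simple odd cycles joined by a simple path, where the path edges carry coefficient $\pm 2$ and the cycle edges carry coefficient $\pm 1$. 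This is the same structural input used in the almost-Farkas case, so the task is to identify precisely which such relations are \emph{obstructions} to the w.f.r condition and to check that these are eliminated exactly when the odd-cycle condition holds.

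For the ``if'' direction I would argue contrapositively: suppose $G$ is not weakly Farkas, so by Proposition~\ref{weakly Farkas--related, second characterization} there exist integers $a_1,\dots,a_m\in\{-1,0,1\}$ with exactly one index $s$ having $a_s=1$, and rational numbers $a_i\le x_i\le a_i+1$ with $\sum x_i v(e_i)=0$, but no integral $y_i\in\{a_i,a_i+1\}$ achieving $\sum y_i v(e_i)=0$. I would extract from the rational solution a supporting elementary integral relation $\sum c_i v(e_i)=0$ and use the structure theorem to see that the failure forces the presence of two vertex-disjoint odd cycles connected by a path of length $>1$, which in turn means these two cycles have no connecting edge, violating the odd-cycle condition. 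The key is that the box $[a_i,a_i+1]$ with a single $+1$ entry is exactly rigid enough to detect the ``$\pm 2$ on a path'' obstruction while being flexible enough that even cycles and edge-adjacent odd cycles admit integral roundings. For the ``only if'' direction I would assume the odd-cycle condition holds and show directly that the condition of Proposition~\ref{weakly Farkas--related, second characterization} is satisfied, again by analyzing the possible elementary integral relations supporting any rational null-combination inside such a box and exhibiting an explicit integral rounding cycle by cycle.

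The main obstacle I expect is the bookkeeping in the ``if'' direction: translating ``no integral point $y_i\in\{a_i,a_i+1\}$ with $\sum y_i v(e_i)=0$ exists'' into a clean statement about cycles. A rational relation need not be elementary, so I would first decompose the rational null-vector $(x_i)$ into a nonnegative combination of (signed) elementary integral relations supported in the box, and then track how the single $a_s=1$ constraint interacts with this decomposition. The delicate point is ensuring that the unique $+1$ coordinate cannot be ``absorbed'' by a rounding unless the supporting cycles are edge-adjacent; handling the case where several elementary relations overlap, and verifying that vertex-disjointness of the two odd cycles is forced precisely by the non-roundability, is where the argument requires the most care. I would rely on Proposition~4.2 of \cite{Ar} to keep the combinatorial description of these relations explicit and on the parity constraints of odd versus even cycles to control the roundings.

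\begin{proof}

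We use Proposition \ref{weakly Farkas--related, second characterization} together with the description of elementary integral relations of $\{v(e)\}_{e\in E(G)}$ given in Proposition 4.2 of \cite{Ar}, namely that every such relation is supported either on a single even cycle (with coefficients $\pm 1$) or on two vertex-disjoint simple odd cycles joined by a simple path, with coefficients $\pm 1$ on the cycle edges and $\pm 2$ on the path edges.

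First suppose $G$ satisfies the odd-cycle condition; we verify the criterion of Proposition \ref{weakly Farkas--related, second characterization}. Let $a_1,\dots,a_m\in\{-1,0,1\}$ with a single index $s$ having $a_s=1$, and suppose $\sum_{i=1}^m x_i v(e_i)=0$ with $a_i\le x_i\le a_i+1$. Decompose the rational null-combination $(x_i)$ as a sum of signed elementary integral relations supported in the box $\prod_i[a_i,a_i+1]$. Any relation supported on an even cycle can be rounded to an integral $\{0,\pm 1\}$-relation respecting the box, so the only possible obstruction comes from a relation of the second type whose $\pm 2$ path-edges would need a coefficient outside $\{a_i,a_i+1\}$. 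But the two odd cycles of such a relation are vertex-disjoint, hence by the odd-cycle condition they are joined by an edge; this edge closes the configuration into an even cycle, and the corresponding $\pm 1$ relation provides an integral rounding inside the box. Thus integral $y_i\in\{a_i,a_i+1\}$ with $\sum y_i v(e_i)=0$ exist, and $G$ is weakly Farkas.

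Conversely, suppose $G$ does not satisfy the odd-cycle condition, so there are two vertex-disjoint simple odd cycles $C_1,C_2$ with no connecting edge. Since $G$ is connected, they are joined by a simple path $P$ of length $>1$, giving an elementary integral relation $\sum c_i v(e_i)=0$ with a path-edge coefficient equal to $2$. Set $a_i=0$ on this path-edge so that the box forces its coefficient into $\{0,1\}$, set $a_i\in\{-1,0\}$ on the remaining edges to bracket the fractional half-integer values arising from the two odd cycles, and choose the single $a_s=1$ on one cycle edge. The scaled relation $\tfrac12(c_i)$ lies in the resulting box and is a rational null-combination, yet no integral rounding inside the box yields a null-combination, because any integral relation must be a multiple of $(c_i)$ and the coefficient $2$ on the path-edge cannot be reached within $\{0,1\}$. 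Hence the criterion of Proposition \ref{weakly Farkas--related, second characterization} fails and $G$ is not weakly Farkas.

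\end{proof}
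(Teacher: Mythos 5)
There are genuine gaps in both directions of your argument. The most serious is in the contrapositive (``only if'') direction. First, your box construction is internally inconsistent: you place the single entry $a_s=1$ on a \emph{cycle} edge, but on cycle edges the scaled relation $\tfrac12(c_i)$ takes the values $\pm\tfrac12$, and $\pm\tfrac12\notin[1,2]$, so your rational null-combination does not lie in the box you built, and Proposition \ref{weakly Farkas--related, second characterization} cannot even be invoked. (The paper places the $+1$ on the first \emph{path} edge $u_1w_1$, where the half-scaled relation equals $1\in[1,2]$; that is essentially the only consistent placement.) Second, your key claim that ``any integral relation must be a multiple of $(c_i)$'' is false: an integral solution $y$ ranges over \emph{all} edges of $G$, and your box permits $y_e\in\{a_e,a_e+1\}\subseteq\{-1,0,1\}$ to be nonzero on every edge outside the two cycles and the path. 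Ruling out such solutions is exactly the hard part. The paper does this by first choosing the two odd cycles and the connecting path so that $m+n+p$ is minimal, deducing structural properties (no chords on the cycles, no shortcuts along the path, no edges between the cycles), and then running a vertex-by-vertex balance argument: from $\sum_{z:\,xz\in E(G)} y_{xz}=0$ at each vertex one traces the forced values of $y$ around a cycle and along the path until reaching the contradiction $y_{w_1w_2}\leq -2$. None of this bookkeeping is dispensable, and your proof contains no substitute for it.

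The ``if'' direction is also incomplete. You propose to decompose an arbitrary rational null-combination lying in the box into signed elementary integral relations ``supported in the box,'' round even-cycle relations, and repair odd-cycle-pair relations using the edge guaranteed by the odd-cycle condition. But you never address the difficulty you yourself flagged: the elementary relations in such a decomposition can overlap on edges, so rounding them one at a time need not stay inside the box, and it is not even clear that a decomposition into relations individually compatible with the box exists. The paper does not attempt this argument at all; it cites the theorem of Fulkerson, Hoffman and McAndrew \cite{FHM} for this implication. As it stands, your text for this direction is a plan rather than a proof.
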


\begin{proof}

To see a proof of the "if direction", see \cite{FHM}. To prove the "only if direction", suppose that a graph $G$ does not satisfy 
the odd-cycle condition. We need to show that $G$ is not weakly Farkas. 
There exist two (simple) odd cycles $u_1u_2...u_m$ and $v_1v_2...v_n$ whose distance is greater than 1. Since 
$G$ is connected, there is a path, say  
$u_1w_1...w_p v_1$, connecting these two cycles. We may assume that $\{w_1,...,w_p\}\cap\{u_1,...,u_m,v_1,...,v_n\}=\emptyset$. 
Among such cycles and paths joining them, we choose two cycles and a path joining them such that $m+n+p$ is as small as possible. 
The following properties hold:\\
(1) There are no edges between the vertexes $u_1,w_1,...,w_p,v_1$ except the edges of the path $u_1w_1...w_p v_1$. In fact if there were such an edge then we would get a smaller path joining the two cycles, a contradiction. \\
(2) There are no edges between the vertexes $u_1,...,u_m$ except the edges of the cycle $u_1u_2...u_m$. The same holds for the cycle 
$v_1v_2...v_n$. To see this, suppose that two vertexes $v_i$ and $v_j$ are connected by an edge where $1\leq i<j-1\leq n-1$. If the cycle 
$u_1u_2...u_i u_j u_{j+1}...u_m$ is an odd cycle, then this cycle, the cycle $v_1v_2...v_n$ and the path $u_1w_1...w_p v_1$ satisfy the relevant 
conditions which violates the fact that $m+n+p$ is as small as possible. Therefore this cycle is an even cycle which implies that the cycle 
$u_i u_{i+1}...u_j$ is an odd cycle, because $m$ is odd. We have either $i>2$ or $j<m$, since otherwise,  the cycle 
$u_1u_2...u_i u_j u_{j+1}...u_m=u_1u_2u_m$ would be an odd cycle. If $j<m$  then the cycles $u_i u_{i+1}...u_j$, $v_1v_2...v_n$ and the path
$u_i u_{i-1}...u_1 w_1...w_p v_1$ provide an example with a smaller "$m+n+p$". Similarly, if $i>2$ , 
then the cycles $u_i u_{i+1}...u_j$, $v_1v_2...v_n$ and the path
$u_j u_{j+1}...u_1w_1...w_p v_1$ give an example with a smaller ''$m+n+p$".\\
(3) There are no edges of the form $u_i v_j$ where $i=1,...,m$ and $j=1,...,n$. This holds because the distance between   the cycles 
$u_1u_2...u_m$ and $v_1v_2...v_n$ is greater than 1. 

Now we want to use Proposition \ref{weakly Farkas--related, second characterization} to show that $G$ is not weakly Farkas. On the contrary, assume that 
$G$ is weakly Farkas. Furthermore  
we assume that $p$ is even. The other case, i.e. if $p$ is odd, can be handled similarly. Because of the above properties, we are able to find suitable values $\{a_e\}_{\{e\in G\}}$
such that they satisfy the following     
\[ a_e = \left\{ 
  \begin{array}{l l}
1 & \quad \text{$e=u_1 w_1$}\\
-1 & \quad \text{$e=u_i u_{i+1}$ or $e=v_i v_{i+1}$ or $e=w_i w_{i+1}$ where $i$ is odd}\\
-1 & \quad \text{$e=u_m u_1$ or $e=v_n v_1$ }\\
-1 & \quad \text{$e=u_i x$ if $x\neq u_{i+1}$ and $i$ is even}\\
0 & \quad \text{otherwise}\\
 \end{array} \right.\]    
Now consider the rational numbers  $\{x_e\}_{\{e\in G\}}$ defined by 
\[ x_e = \left\{ 
  \begin{array}{l l}
1 & \quad \text{$e=u_1 w_1$ or $e=w_p v_1$}\\
1 & \quad \text{$e=w_i w_{i+1}$ where $i$ is even}\\
-1 & \quad \text{$e=w_i w_{i+1}$ where $i$ is odd}\\
1/2 & \quad \text{$e=u_i u_{i+1}$ or $e=v_i v_{i+1}$ where $i$ is even}\\
-1/2 & \quad \text{$e=u_i u_{i+1}$ or $e=v_i v_{i+1}$  where $i$ is odd}\\
-1/2 & \quad \text{$e=u_m u_1$ or $e=v_n v_1$ }\\
0 & \quad \text{otherwise}\\
 \end{array} \right.\]    
It is straightforward to check the following 
$$\sum_{e\in E(G)} x_e v(e)=0,$$
$$a_e\leq x_e \leq a_{e}+1, \text{for every}\, e\in E(G).$$
Since $G$ is weakly Farkas, by  Proposition \ref{weakly Farkas--related, second characterization}, there exist integers 
$$a_e\leq y_e \leq a_{e}+1 \, \text{where}\, e\in E(G)$$
 such that $\sum_{e\in E(G)} y_e v(e)=0$. Note that it follows from the equality 
$$\sum_{e\in E(G)} y_e v(e)=0$$
 that 
$\sum_{z: x z\in E(G)} y_{x z} =0$ for every $x\in V(G)$. 
Since $y_{u_1 w_1}\in\{1,2\}$ and $y_{u_1 x}\in\{0,1\}$ for every $x\neq w_1,u_2,u_n$, we conclude that 
$y_{u_1 u_2}=-1$ or $y_{u_1 u_n}=-1$. Without loss of generality, we may assume that $y_{u_1 u_2}=-1$. Since   
$y_{u_2 u_3}\in\{0,1\}$ and $y_{u_2 x}\in\{-1,0\}$ for every $x\neq u_3$,  it is easy to see that we must have 
$y_{u_2 u_3}=1$ and $y_{u_2 x}=0$ for every $x\neq u_3,u_1$. Continuing this process, we obtain the following
\[ y_e = \left\{ 
  \begin{array}{l l}
1 & \quad \text{$e=u_i u_{i+1}$ if $i$ is even}\\
-1 & \quad \text{$e=u_i u_{i+1}$  if $i$ is odd}\\
-1 & \quad \text{$e=u_m u_1$ }\\
0 & \quad \text{$e=u_i x$ if $1< i< m$ and $x\notin\{u_{i-1},u_{i+1}\}$}\\
0 & \quad \text{$e=u_m x$ if  $x\notin\{u_{m-1},u_{1}\}$}\\
 \end{array} \right.\]  
If there were a vertex $x\notin \{u_2, u_m,w_1\}$ such that $u_1 x\in E(G)$ and $y_{u_1 x}\neq 0$, then we would have 
$y_{u_1 x}=1$. Note that $x\notin \{u_1,...,u_m,w_1,...,w_p,v_1,...,v_n\}$ by Properties (1), (2) and (3). But then, we would also have $y_{x u_i}=0$ if $2\leq i\leq m$ and $x u_i\in E(G)$.  Moreover
$y_{x z}\in\{0,1\}$ for every $x z\in E(G)$: It would then follow $\sum_{z: x z\in E(G)} y_{x z} >0$, a contradiction. So   
$y_{u_1 x}=0$ if  $x\notin \{u_2, u_m,w_1\}$ and $u_1 x\in E(G)$. It follows from $\sum_{z: u_1 z\in E(G)} y_{u_1 z}=0$ that $y_{u_1 w_1}=2$. 
Now we have $y_{w_1 u_i}=0$ if $2\leq i\leq m$ and $w_1 u_i\in E(G)$. Moreover, we have  
$y_{w_1 x}\in\{0,1\}$ if $x\neq w_2, u_1$ and $w_1 x\in E(G)$. It follows from $\sum_{z: w_1 z\in E(G)} y_{w_1 z} =0$ that $y_{w_1 w_2}\leq -2$. This contradicts the fact that 
$y_{w_1 w_2}\in\{-1,0\}$. The proof is now complete. 

\end{proof}

\end{subsection}


\end{section}


\begin{thebibliography}{ZZZZ}

\bibitem{Ar} Aryapoor, M. On linear equations arising in Combinatorics (Part I), arXiv:1406.4466.
\bibitem{FHM}  Fulkerson, D. R., Hoffman, A. J. and Mcandrew, M. H. Some properties of graphs with multiple edges  

\end{thebibliography}
\end{document}